\newtheorem{theo}{Theorem}[section]
\newtheorem{lemma}[theo]{Lemma}
\newtheorem{propo}[theo]{Proposition}
\newtheorem{defi}[theo]{Definition}
\newtheorem{coro}[theo]{Corollary}
\newtheorem{rem}[theo]{Remark}
\newtheorem{exam}[theo]{Example}
\newtheorem{exams}[theo]{Examples}
\newcommand\op{\operatorname{op}}
\newcommand\Set{\operatorname{\bf Set}}
\newcommand\CPO{\operatorname{\bf CPO}}
\newcommand\MGra{\operatorname{\bf MGra}}
\newcommand\Gra{\operatorname{\bf Gra}}
\newcommand\colim{\operatorname{colim}}
\newcommand\ca{\mathcal {A}}
\newcommand\cb{\mathcal {B}}
\newcommand\cc{\mathcal {C}}
\newcommand\cd{\mathcal {D}}
\newcommand\ck{\mathcal {K}}
\newcommand\cl{\mathcal {L}}
\date{March 26, 2018}
\begin{document}
\title[Nearly locally presentable categories]
{Nearly locally presentable categories}
\author[L. Positselski and J. Rosick\'{y}]
{L. Positselski and J. Rosick\'{y}}
\thanks{Supported by the Grant Agency of the Czech Republic under
the grant P201/12/G028.  The first-named author's research in Israel
is supported by the ISF grant~\#\,446/15}  
\address{
\newline L. Positselski\newline
Department of Mathematics\newline
Faculty of Natural Sciences, University of Haifa\newline
Mount Carmel, Haifa 31905, Israel -- and --\newline
Institute for Information Transmission Problems\newline
Bolshoy Karetny per.~19, Moscow 127051, Russia -- and -- \newline
Laboratory of Algebraic Geometry\newline
National Research Univerity Higher School of Economics\newline
Usacheva 6, Moscow 119048, Russia\newline
positselski@yandex.ru\newline\newline
J. Rosick\'{y}\newline
Department of Mathematics and Statistics\newline
Masaryk University, Faculty of Sciences\newline
Kotl\'{a}\v{r}sk\'{a} 2, 611 37 Brno, Czech Republic\newline
rosicky@math.muni.cz
}
 
\begin{abstract}
We introduce a new class of categories generalizing locally presentable ones. The distinction does not manifest in the abelian
case and, assuming Vop\v enka's principle, the same happens in the regular case. The category of complete partial orders is the natural example of a nearly locally finitely presentable category which is not locally presentable.  
\end{abstract} 
\keywords{}
\subjclass{}

\maketitle

\section{Introduction}

Locally presentable categories were introduced by P. Gabriel and F. Ulmer in \cite{GU}. A category $\ck$ is locally $\lambda$-presentable if it is cocomplete and has a strong generator consisting of $\lambda$-presentable objects. Here, $\lambda$ is a regular cardinal and an object $A$ is $\lambda$-presentable if its hom-functor $\ck(A,-):\ck\to\Set$ preserves $\lambda$-directed colimits.
A category is locally presentable if it is locally $\lambda$-presentable for some $\lambda$. This concept of presentability
formalizes the usual practice -- for instance, finitely presentable groups are precisely groups given by finitely many generators 
and finitely many relations. Locally presentable categories have many nice properties, in particular they are complete and co-wellpowered.

Gabriel and Ulmer \cite{GU} also showed that one can define locally presentable categories by using just monomorphisms instead all morphisms. They defined $\lambda$-generated objects as those whose hom-functor $\ck(A,-)$ preserves $\lambda$-directed colimits of monomorphisms. Again, this concept formalizes the usual practice -- finitely generated groups are precisely groups admitting a finite set of generators. This leads to locally generated categories, where a cocomplete category $\ck$ is locally $\lambda$-generated if it has a strong generator consisting of $\lambda$-generated objects and every object of $\ck$ has only a set of strong quotients. Since a locally presentable category is co-wellpowered, every locally $\lambda$-presentable category is locally $\lambda$-generated. Conversely, a locally 
$\lambda$-generated category is locally presentable but not necessarily locally $\lambda$-presentable (see \cite{GU} or \cite{AR}). In particular, each locally generated category is co-wellpowered. Under Vop\v enka's principle, we can omit weak co-wellpoweredness in the definition of a locally generated category, but it is still open whether one needs set theory for this (see \cite{AR}, Open Problem 3).

We introduce further weakening of the concept of presentability -- an object $A$ is nearly $\lambda$-presentable if its hom-functor
$\ck(A,-)$ preserves $\lambda$-directed colimits given by expressing a coproduct by its subcoproducts of size $<\lambda$. Any 
$\lambda$-presentable object is nearly $\lambda$-presentable and, if coproduct injections are monomorphisms, any $\lambda$-generated
object is nearly $\lambda$-presentable. In this case, an object $A$ is nearly $\lambda$-presentable if every morphism from $A$ to 
the coproduct $\coprod_{i\in I}K_i$ factorizes through $\coprod_{j\in J}K_i$ where $|J|<\lambda$. This concept is standard
for triangulated categories where the resulting objects are called $\lambda$-small (see \cite{N}). 

We say that a cocomplete category $\ck$ is nearly locally $\lambda$-presentable if it has a strong generator consisting of nearly 
$\lambda$-presentable objects and every object of $\ck$ has only a set of strong quotients. This definition looks quite weak because
$\lambda$-directed colimits used for defining nearly $\lambda$-presentable objects are very special. But, surprisingly, any abelian nearly locally $\lambda$-presentable category is locally presentable. These abelian categories were introduced in \cite{PS} and called locally weakly generated. This is justified by the fact that coproduct injections are monomorphisms there and thus nearly $\lambda$-presentable objects generalize $\lambda$-generated ones. Since weakly locally presentable categories mean something else (see \cite{AR}), we had to change our terminology. We even show that for categories with regular factorizations of morphisms (by a regular epimorphism followed 
by a monomorphism), the fact that nearly locally presentable categories are locally presentable is equivalent to Vop\v enka's principle. Thus we get some artificial examples of nearly locally presentable categories which are not locally presentable under the negation 
of Vop\v enka's principle. A natural example of this, not depending on set theory, is the category $\CPO$ of complete partial orders. It is nearly locally finitely presentable but not locally presentable. This category plays a central role in theoretical computer science, in denotational semantics and domain theory.

\medskip\noindent
{\bf Acknowledgement.} We are very grateful to the referee for suggestions which highly improved our results and presentation.
 In particular, Lemma~\ref{weaklycolimdense} is due to the referee.
 
\section{Nearly presentable objects}

\begin{defi}\label{cdir}
{
\em
$\lambda$-directed colimits $\coprod_{j\in J}K_j\to\coprod_{i\in I}K_i$, where $J$ ranges over all the subsets of $I$ of cardinality less than $\lambda$, will be called \textit{special $\lambda$-directed colimits}.
}
\end{defi}

\begin{defi}\label{bpres}
{
\em
Let $\mathcal K$ be a category with coproducts and $\lambda$ a regular cardinal. An object $A$ of $\mathcal K$ will be called
\textit{nearly} $\lambda$-\textit{presentable} if its hom-functor $\ck(A,-):\ck\to\Set$ preserves special $\lambda$-directed colimits.
}
\end{defi}

\begin{rem}\label{bpres1}
{
\em
(1) This means that $\ck(A,-)$ sends special $\lambda$-directed colimits to $\lambda$-directed colimits and not to special
$\lambda$-directed ones (because $\ck(A,-)$ does not preserve coproducts). 

Explicitly, for every morphism $f:A\to\coprod_{i\in I} K_i$ there is a subset $J$ of $I$ of cardinality less than $\lambda$ such that $f$ factorizes as  
$$
A\xrightarrow{\ g\ } \coprod_{j\in J} K_j\xrightarrow{\ u \ }K_i
$$   
where $u$ is the subcoproduct injection. Moreover, this factorization is essentially unique in the sense that if $f=gu=g'u$ 
then there is a subset $J'$ of $I$ of cardinality $<\lambda$ such that $J\subseteq J'$ and the coproduct injection 
$\coprod_{j\in J} K_j\to\coprod_{j'\in J} K_{j'}$ merges $g$ and $g'$.

(2) If coproduct injections are monomorphisms, the essential uniqueness is automatic. Thus $A$ is nearly $\lambda$-presentable if and only if for every morphism  $f:A\to\coprod_{i\in I}K_i$ there is a subset $J$ of $I$ of cardinality less than $\lambda$ such that $f$ factorizes as $A\to\coprod_{j\in J}K_j\to\coprod_{i\in I}K_i$ where the second morphism is the subcoproduct injection.    

(3) Coproduct injections are very often monomorphisms, for instance in any pointed category. However, in the category of commutative rings, the coproduct is the tensor product and the coproduct injection $\mathbb Z\to\mathbb Z\otimes\mathbb Z/2\cong\mathbb Z/2$ is not a monomorphism.

(4) Any $\lambda$-presentable object is nearly $\lambda$-presentable. We say that $A$ is \textit{nearly presentable} if it is 
nearly $\lambda$-presentable for some $\lambda$.  

(5) An object $K$ is called coproduct-presentable if its hom-functor $\ck(K,-)$ preserves coproducts (see \cite{H}); these objects are also called indecomposable or connected. Any coproduct-presentable object is nearly $\aleph_0$-presentable.
}
\end{rem}

Recall that an epimorphism $f: K\to L$ is strong if each commuting square
$$
\xymatrix@=3pc{
L \ar[r]^{v} & B \\
K \ar [u]^{f} \ar [r]_{u} &
A \ar[u]_{g}
}
$$
such that $g$ is a monomorphism has a diagonal fill-in, i.e., a morphism $t:L\to A$ with $tf=u$ and $gt=v$.  

A colimit of a diagram $D:\cd\to\ck$ is called $\lambda$-small if $\cd$ contains less than $\lambda$ morphisms.

\begin{rem}\label{wgen}
{
\em
(1) A $\lambda$-small colimit of nearly $\lambda$-presentable objects is nearly $\lambda$-presentable. The proof is analogous
to \cite{AR}, 1.3.

(2) If coproduct injections are monomorphisms then any strong quotient of a nearly $\lambda$-presentable object is nearly
$\lambda$-presentable. In fact, let $g:A\to B$ be a strong quotient and $f:B\to\coprod_{i\in I}K_i$. There is $J\subseteq I$
of cardinality less than $\lambda$ such that $fg$ factorizes through $\coprod_{j\in J}K_j$
$$
\xymatrix@=3pc{
B \ar[r]^{f} & \coprod_i K_i \\
A \ar [u]^{g} \ar [r]_{} &
\coprod_j K_j \ar[u]_{u}
}
$$
Since $u$ is a monomorphism, there is a diagonal $h:B\to\coprod_jK_j$ factorizing $f$ through $\coprod_{j\in J}K_j$.
}
\end{rem}

\begin{lemma}\label{up}
If $\lambda_1\leq\lambda_2$ then a nearly $\lambda_1$-presentable object $A$ is nearly $\lambda_2$-presentable.
\end{lemma}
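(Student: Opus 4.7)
I would verify nearly $\lambda_2$-presentability of $A$ directly from the explicit characterization in Remark~\ref{bpres1}(1), checking existence and essential uniqueness of a factorization for an arbitrary morphism $f\colon A\to\coprod_{i\in I}K_i$. The existence part is immediate from $\lambda_1\leq\lambda_2$: nearly $\lambda_1$-presentability already produces $J\subseteq I$ with $|J|<\lambda_1$ and a factorization $f=u_Jg$ through the subcoproduct injection $u_J$, which \emph{a fortiori} satisfies $|J|<\lambda_2$.

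The substance of the proof is the essential uniqueness, and my strategy is \emph{descend, apply, enlarge}. Suppose $g,g'\colon A\to\coprod_{j\in J}K_j$ with $|J|<\lambda_2$ satisfy $u_Jg=u_Jg'$, where $u_J$ denotes the injection into $\coprod_{i\in I}K_i$. I would first apply nearly $\lambda_1$-presentability to each of $g$ and $g'$ to obtain subsets $J_g,J_{g'}\subseteq J$ of size $<\lambda_1$ through which they factor, and then combine them into $J_0=J_g\cup J_{g'}$, still of size $<\lambda_1$. Then $g=u_{J_0,J}g_0$ and $g'=u_{J_0,J}g'_0$ for suitable $g_0,g'_0\colon A\to\coprod_{j\in J_0}K_j$, and the hypothesis $u_Jg=u_Jg'$ becomes $u_{J_0}g_0=u_{J_0}g'_0$. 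Essential $\lambda_1$-uniqueness now supplies $J_1\supseteq J_0$ with $|J_1|<\lambda_1$ such that $u_{J_0,J_1}g_0=u_{J_0,J_1}g'_0$. Setting $J'=J\cup J_1$, regularity of $\lambda_2$ gives $|J'|<\lambda_2$, and a short chase using the factorization $u_{J_0,J'}=u_{J_1,J'}\circ u_{J_0,J_1}$ shows $u_{J,J'}g=u_{J,J'}g'$, as required.

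No serious obstacle is anticipated. The only point that requires a moment of thought is arranging $g$ and $g'$ to factor through a \emph{common} $\lambda_1$-small subcoproduct, which is handled by taking the union $J_g\cup J_{g'}$; after that, the argument reduces to bookkeeping with subcoproduct injections and one use of the regularity of $\lambda_2$. In the special case when coproduct injections are monomorphisms (Remark~\ref{bpres1}(2)), essential uniqueness is automatic and only the existence step is needed, so the statement is trivial there.
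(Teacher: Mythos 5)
Your proof is correct and takes essentially the same route as the paper's: existence is immediate from $\lambda_1\leq\lambda_2$, and essential uniqueness is handled by descending both factorizations to a common $\lambda_1$-small subcoproduct, applying $\lambda_1$-essential uniqueness there, and enlarging back to $J\cup J_1$, whose size is controlled by the regularity of $\lambda_2$. The paper's argument is just a terser version of the same bookkeeping (its $J'$ and $J''$ are your $J_0$ and $J_1$).
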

\begin{proof}
Let $A$ be nearly $\lambda_1$-presentable. Then any morphism $f:A\to\coprod_{i\in I}K_i$ factorizes through a subcoproduct
$\coprod_{j\in J}K_j$ where $|J|<\lambda_1\leq\lambda_2$. Assume that we have two factorizations given by 
$f_k:A\to\coprod_{j\in J}K_j$ where $|J|<\lambda_2$. Each of $f_k$ factorizes through $g_k:A\to\coprod_{j\in J'}K_j$, where 
$J'\subseteq J$ and $|J'|<\lambda_1$ and $k=1,2$. There is $J'\subseteq J''\subseteq I$ such that $|J''|<\lambda_1$ and
$\coprod_{J'}K_j\to\coprod_{J''}K_j$ coequalizes $g_1$ and $g_2$. Thus $\coprod_JK_j\to\coprod_{J\cup J''}K_j$ coequalizes $f_1$ and $f_2$.
\end{proof}

\begin{exams}\label{examples}
{
\em
(1) Let $\ck$ be a category where every object is a coproduct of coproduct-presentable objects.
Then coproduct injections are monomorphisms in~$\ck$.
Furthermore, nearly $\lambda$-presentable objects are precisely the $\lambda$-small coproducts of coproduct-presentable objects.
Examples are the categories of sets, graphs, posets, or presheaves of sets. In the category of sets, nearly $\lambda$-presentable objects coincide with $\lambda$-presentable ones. In the categories of graphs and posets, they are precisely the objects having less that $\lambda$ connected components.
Observe that there is a proper class of non-isomorphic nearly $\lambda$-presentable objects there.

(2) More generally, let $\ck$ be a category where every object is a coproduct of nearly $\aleph_0$-presentable objects.
 Then nearly $\lambda$-presentable objects are precisely the $\lambda$-small coproducts of nearly $\aleph_0$-presentable objects.
 The category of vector spaces (over any fixed field) is an example.
 An object of the category of vector spaces is nearly $\lambda$-presentable iff it is $\lambda$-presentable.

(3) An abelian group $A$ is nearly  $\lambda$-presentable iff it is $\lambda$-presentable.

Assume at first that $\lambda$ is an uncountable cardinal.
Then, to say that an abelian group $A$ does not have a set of generators of the cardinality less than $\lambda$, simply means that the cardinality of $A$ is at least~$\lambda$.
Any abelian group is a subgroup of an injective (=~divisible) abelian group (see, e.g., \cite{R}, 3.35(i) and 3.36).
According to the classification of injective abelian groups (or, more generally,
injective modules over a Noetherian commutative ring), an injective abelian
group is a direct sum of indecomposable injectives, which are precisely
the abelian groups $\mathbb Q$ and $\mathbb Q_p/\mathbb Z_p=\mathbb Z[\frac{1}{p}]/\mathbb Z$ (\cite{Mat}, 2.5 and~3.1). All of these are countable.
Thus, to embed an abelian group $A$ of the cardinality at least $\lambda$ into
an injective abelian group $J$, the group $J$ has to be the direct sum of at least
$\lambda$ (indecomposable injective) abelian groups. Hence $A$ is not nearly
$\lambda$-presentable.

Now, assume that $\lambda=\aleph_0$. Let $A$ be an infinitely generated abelian
group. Pick elements $a_1$, $a_2$, $a_3$,~\dots\ in $A$ such that $a_{n+1}$ does not
belong to the subgroup generated by $a_1$,~\dots,~$a_n$ in $A$. Denote this subgroup
by $A_n \subset A$, and the union of $A_n$ over the natural numbers~$n$ by $A_\omega$.
Then we have a natural (injective) morphism of abelian groups $h_\omega:A_\omega\to\bigoplus_{n=1}^\infty A_\omega/A_n$.
For every~$n$, choose an injective abelian group $J_n$ such that $A_\omega/A_n$ is a subgroup in $J_n$.
Then we also have an injective morphism $j_\omega:\bigoplus_{n=1}^\infty A_\omega/A_n\to\bigoplus_{n=1}^\infty J_n$.
Now it is important that the class of injective abelian groups is closed under
direct sums (see~\cite{R}, 3.31). The direct sum of $J_n$ being injective, we can extend the
morphism $j_\omega h_\omega:A_\omega\to\bigoplus_{n=1}^\infty J_n$ to a morphism $h:A\to\bigoplus_{n=1}^\infty J_n$. Since the image of this morphism is not contained in
the direct sum of any finite subset of $J_n$ (as the image of~$h_\omega$ is
not contained in the direct sum of any finite subset of $A_\omega/A_n$),
it follows that $A$ is not nearly finitely presentable.

(4) The same argument as in~(3) applies to modules over any countable
Noetherian commutative ring (in place of the abelian groups).
}
\end{exams}

\section{Nearly locally presentable categories}

Recall that a strong generator is a small full subcategory $\ca$ of $\ck$ such that the functor $E_\ca:\ck\to\Set^{\ca^{\op}}$, 
$EK=\ck(-,K)$, is faithful and conservative (= reflects isomorphisms). A generator $\ca$ of $\ck$ is strong if and only if for each object $K$ and each proper subobject of $K$ there exists a morphism $A\to K$ with $A\in\ca$ which does not factorize through that subobject.

A category is called weakly co-wellpowered if every its object has only a set of strong quotients.  

\begin{defi}\label{blp}
{
\em
A cocomplete category $\ck$ will be called \textit{nearly locally} $\lambda$-\textit{presentable} if it is weakly co-wellpowered and has a strong generator consisting of nearly $\lambda$-presentable objects.  

A category is \textit{nearly locally presentable} if it is nearly locally $\lambda$-presentable for some regular cardinal $\lambda$.
}
\end{defi}

This concept was introduced in \cite{PS} for abelian categories.
Since any abelian category with a generator is co-wellpowered,
weak co-wellpoweredness does not need to be assumed there.
Any locally $\lambda$-presentable category is nearly locally
$\lambda$-presentable (by \cite{AR}, 1.20 and since a locally
presentable category is co-wellpowered~\cite{AR}, 1.58).

\begin{rem}\label{blp1}
{
\em
Let $\ck$ be a nearly locally $\lambda$-presentable category and $\ca$ its strong generator consisting of nearly $\lambda$-presentable objects. Following \ref{bpres1}(1), $E_\ca:\ck\to\Set^{\ca^{\op}}$ sends special $\lambda$-directed colimits to $\lambda$-directed colimits.
}
\end{rem}

We say that a category $\ck$ has monomorphisms \textit{stable under} $\lambda$-\textit{directed colimits} if for every $\lambda$-directed set of subobjects $(K_i)_{i\in I}$ of $K\in\ck$ the induced morphism $\colim_{i\in I}K_i\to K$ is a monomorphism. Following \cite{AR} 1.62,
any locally $\lambda$-presentable category has this property. 

A category has regular factorizations if every morphism can be decomposed as a regular epimorphism followed by a monomorphism.
Any regular category (in particular, any abelian category) has this property (see \cite{B} I.2.3). The following result was proved in \cite{PS} 9.1 for abelian categories and for $\lambda=\aleph_0$.
 
\begin{propo}\label{exact}
Every nearly locally $\lambda$-presentable category with regular factorizations has monomorphisms stable under $\lambda$-directed colimits.
\end{propo}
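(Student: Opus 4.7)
The plan is to apply the regular factorization to the induced map $\ell : L \to K$ and use near $\lambda$-presentability of the strong generator to show that its regular-epi part is an isomorphism. Write $L := \colim_i K_i$ with cocone $\tau_i : K_i \to L$, and $\ell : L \to K$ with $\ell \tau_i = \mu_i$, where $\mu_i : K_i \hookrightarrow K$ are the given monomorphisms (the transition morphisms $\phi_{ij} : K_i \to K_j$ are automatically monos, since $\mu_i = \mu_j \phi_{ij}$). Factor $\ell = m \circ h$ with $m : M \hookrightarrow K$ mono and $h : L \to M$ regular epi. By uniqueness of regular factorizations any morphism that is both a regular epi and a mono is an isomorphism, so it suffices to show that $h$ is an isomorphism. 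I will also introduce the coproduct $C := \coprod_i K_i$ with injections $\alpha_i$, the canonical regular epi $\sigma : C \to L$ (so $\sigma \alpha_i = \tau_i$, and $L$ is the coequalizer of the standard parallel pair $\coprod_{i \le j} K_i \rightrightarrows C$ encoding the directed colimit), $\iota := \ell \sigma : C \to K$ the copairing of the $\mu_i$, and $e' : C \to M$ the regular-epi part of the factorization $\iota = m \circ e'$; then $e' = h \sigma$.

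The heart of the argument is the following claim: for every $A \in \ca$ and every pair $u, v : A \to C$, if $\iota u = \iota v$ then $\sigma u = \sigma v$. I will prove it by using near $\lambda$-presentability of $A$ to factor $u = \alpha_J u'$ and $v = \alpha_J v'$ through a common sub-coproduct $\coprod_{j \in J} K_j$ with $|J| < \lambda$ (where $\alpha_J : \coprod_{j \in J} K_j \to C$ is the subcoproduct inclusion). Since $I$ is $\lambda$-directed, $J$ has an upper bound $j^* \in I$, and the codiagonal $\delta_J : \coprod_{j \in J} K_j \to K_{j^*}$ assembled from the transitions $\phi_{j, j^*}$ satisfies $\iota \alpha_J = \mu_{j^*} \delta_J$ and $\sigma \alpha_J = \tau_{j^*} \delta_J$. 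The hypothesis $\iota u = \iota v$ then reads $\mu_{j^*} \delta_J u' = \mu_{j^*} \delta_J v'$; monicity of $\mu_{j^*}$ gives $\delta_J u' = \delta_J v'$, and composing with $\tau_{j^*}$ yields $\sigma u = \sigma v$. This is the only place where near $\lambda$-presentability and $\lambda$-directedness of $I$ are essential, and I expect the main conceptual work to lie here.

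To conclude, I will extend the claim to arbitrary $X$: for $u, v : X \to C$ with $e' u = e' v$ and any $a : A \to X$ ($A \in \ca$), the claim applied to $u a, v a$ gives $\sigma(u a) = \sigma(v a)$, so faithfulness of $E_\ca$ from the strong generator hypothesis forces $\sigma u = \sigma v$. Hence $\sigma$ coequalizes every pair that $e'$ does, and the universal property of $e'$ as a regular epi supplies $h' : M \to L$ with $h' e' = \sigma$. Epicness of $e'$ and $\sigma$ then forces $h h' = \id_M$ and $h' h = \id_L$, so $h$ is an isomorphism and $\ell = m h$ is a monomorphism.
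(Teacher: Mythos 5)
Your proof is correct, and its engine is exactly the computation at the heart of the paper's proof: given $u,v\colon A\to\coprod_{i\in I}K_i$ with $A$ nearly $\lambda$-presentable, factor both through a common subcoproduct $\coprod_{j\in J}K_j$ with $|J|<\lambda$, use $\lambda$-directedness to push into a single $K_{j^*}$, and cancel the monomorphism $K_{j^*}\to K$. The two arguments differ in how this local statement is globalized. The paper forms the kernel pair $p_1,p_2\colon P\to\coprod_iK_i$ of the copairing $p\colon\coprod_iK_i\to K$, tests it against morphisms $A\to P$ from the strong generator to get $fp_1=fp_2$, and then identifies the canonical regular epimorphism $f\colon\coprod_iK_i\to L$ as the coequalizer of the kernel pair of $p$, so that $t$ is the monic part of the regular factorization of $p$; this needs kernel pairs, which the paper has to borrow from the later completeness result (Proposition~\ref{complete}). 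You instead present the regular-epi part $e'$ of $\iota$ as the coequalizer of \emph{some} parallel pair, show via your claim and faithfulness of $E_\ca$ that $\sigma$ coequalizes that pair, and so factor $\sigma$ through $e'$; this buys independence from completeness and kernel pairs, at the price of one extra bookkeeping step. That step is the only point needing care: your identification $e'=h\sigma$ uses that a composite of regular epimorphisms is again a regular epimorphism, which holds here because regular factorizations force regular and strong epimorphisms to coincide (\cite{AR}~0.5); alternatively you can define $e'$ directly as the regular-epi part of $\iota$ and recover $h$ by checking $e'$ coequalizes the standard pair presenting $L$ (using that $m$ is monic), which avoids the issue entirely.
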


\begin{proof}
Let $(K_i)_{i\in I}$ be a $\lambda$-directed set of subobjects, $k_i:K_i\to L=\colim_{i\in I}K_i$ a colimit cocone and
$t:L\to K$ the induced morphism. Since $tk_i:K_i\to K$ are monomorphisms, $k_i$ are monomorphisms.
Let $f:\coprod_{i\in I} K_i\to L$ and $p:\coprod_{i\in I} K_i\to K$ be the induced morphisms.
Clearly, $tf=p$.

As we will see below in~\ref{complete}, every nearly locally presentable category is complete; in particular, it has kernel pairs. 
Let $p_1$, $p_2:P\to\coprod_{i\in I}K_i$ be a kernel pair of $p$. It suffices to prove that $fp_1=fp_2$.
In this case, $p_1$, $p_2$ is a kernel pair of $f$ because $fg_1=fg_2$ implies that $pg_1=pg_2$. Since $f$ is a regular epimorphism, it is a coequalizer of $p_1$,~$p_2$.
Since $\ck$ has regular factorizations, $t$ is a monomorphism. Indeed, the regular factorization of $p$ should consist of the coequalizer of $p_1$, $p_2$ followed by the induced morphism.

Let $\ca$ be a strong generator of $\ck$ consisting of nearly $\lambda$-presentable objects. It suffices to prove that $fp_1h=fp_2h$
for any $h:A\to P$, $A\in\ca$. Since $A$ is nearly $\lambda$-presentable, there is $J\subseteq I$ of cardinality less that $\lambda$
such that $p_nh$ factorizes through $\coprod_{j\in J}K_j$ for $n=1$,~$2$. Since $(K_i)_{i\in I}$ is a $\lambda$-directed set, there is $K_m$, $m\in I$ such that $fp_nh$ factorizes through the monomorphism $k_m:K_m\to L$ for $n=1$,~$2$. Let $q_1,q_2:A\to K_m$ be the corresponding factorizations.
Then $tk_mq_1=tk_mq_2$, hence $q_1=q_2$ and thus $fp_1h=fp_2h$.
\end{proof}

Recall that a category is bounded if it has a small dense subcategory
(see \cite{AR}).

\begin{coro}\label{regular}
Every nearly locally presentable category with regular factorizations is bounded.
\end{coro}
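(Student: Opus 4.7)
The plan is to construct a small dense subcategory by closing the given strong generator $\ca$ of nearly $\lambda$-presentable objects under $\lambda$-small colimits and strong quotients, then using Proposition~\ref{exact} to realize every object as a canonical colimit. Let $\cb$ be an essentially small full subcategory of $\ck$ containing $\ca$ and closed up to isomorphism under $\lambda$-small colimits and under strong quotients; such a $\cb$ exists because there is only a set of $\lambda$-small diagrams on the small category $\ca$ and, by weak co-wellpoweredness, only a set of strong quotients per object, so iterating the two closures $\omega$-many times stays within a set.

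To prove $\cb$ is dense, fix $K\in\ck$ and consider the canonical diagram $\cb/K\to\ck$, $(B,f)\mapsto B$. Let $\cs_K\subseteq\cb/K$ be the full subcategory of pairs $(B,m)$ with $m$ a monomorphism. I would verify three claims: (i) the inclusion $\cs_K\hookrightarrow\cb/K$ is final, so the colimits agree; (ii) $\cs_K$ is $\lambda$-directed; (iii) the comparison $\iota\colon\colim_{\cs_K}B\to K$ is an isomorphism. For (i), every $(B,f)\in\cb/K$ admits its regular factorization $f=m\circ q$ with image $B'\in\cb$ by strong-quotient closure, giving $(B',m)\in\cs_K$ and a morphism $(B,f)\to(B',m)$; connectedness of the relevant comma category is obtained by merging two such extensions $u_j\colon B\to B_j'$ inside the image of the induced map $[m_1,m_2]\colon B_1'\sqcup B_2'\to K$, using that $\cb$ is closed under binary coproducts and strong quotients. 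For (ii), any fewer than $\lambda$ subobjects $(B_i,m_i)\in\cs_K$ have a common upper bound given by the image of the induced map $\coprod B_i\to K$, which lies in $\cs_K$ again by strong-quotient closure of $\cb$. For (iii), Proposition~\ref{exact} makes $\iota$ a monomorphism, and strong generation forces $\iota$ to be an isomorphism: every morphism $f\colon A\to K$ with $A\in\ca\subseteq\cb$ factors through some $(B,m)\in\cs_K$ by~(i) and hence through $\iota$, so no generator-morphism misses the subobject $\iota$.

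The main obstacle is the connectedness part of step~(i)---the coproduct-merging argument---without which the cofinal replacement of $\cb/K$ by the $\lambda$-directed $\cs_K$ would fail, and Proposition~\ref{exact} could not be brought to bear. Closure under strong quotients is the ingredient that powers both the existence and the connectedness in step~(i); Proposition~\ref{exact} is the ingredient that turns the $\lambda$-directed colimit of monomorphisms into an actual monomorphism. Combining these, strong generation of $\ca$ is upgraded to density of $\cb$, producing the small dense subcategory that witnesses boundedness.
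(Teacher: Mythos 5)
Your proof is correct and follows essentially the same route as the paper: close $\ca$ under $\lambda$-small colimits and strong quotients to get a small subcategory, replace the canonical diagram over $K$ by its cofinal subdiagram of monomorphisms via regular factorizations, and apply Proposition~\ref{exact} plus strong generation to conclude the comparison morphism is an isomorphism. You merely spell out the finality/connectedness and $\lambda$-directedness details that the paper leaves implicit.
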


\begin{proof}
Let $\ca$ be a strong generator of a nearly locally $\lambda$-presentable category $\ck$ consisting of nearly $\lambda$-presentable objects. Let $\overline{\ca}$ be the closure of $\ca$ under $\lambda$-small colimits and strong quotients. Since $\ck$ is weakly co-wellpowered, $\overline{\ca}$ is small. For an object $K$ of $\ck$ we form the canonical diagram $D$ w.r.t. $\overline{\ca}$ and take its colimit $K^\ast$. Since $\ca$ is a strong generator, it suffices to prove that the induced morphism $t:K^\ast\to K$ is a monomorphism. Then it is an isomorphism because every morphism $f:A\to K$, $A\in\ca$ factorizes through~$t$.

Since $\ck$ has regular factorizations and $\overline{\ca}$ is closed under strong quotients, the subdiagram $D_0$ of $D$ consisting 
of monomorphisms $f:A\to K$, $A\in\overline{\ca}$ is cofinal in $D$. Thus $K^\ast=\colim D_0$ and, following \ref{exact}, $t$ is 
a monomorphism.
\end{proof}

In fact, we have proved that every weakly co-wellpowered category $\ck$ having regular factorizations, a strong generator and monomorphisms closed under $\lambda$-directed colimits is bounded.

\begin{theo}\label{abelian}
Every nearly locally presentable abelian category is locally presentable.
\end{theo}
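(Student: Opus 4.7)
Combining Proposition~\ref{exact} with the fact that every abelian category admits regular factorizations, monomorphisms in $\ck$ are stable under $\lambda$-directed colimits; equivalently, $\lambda$-directed colimits in $\ck$ are exact. Together with Corollary~\ref{regular}, this says $\ck$ is bounded and has exact $\lambda$-directed colimits. The strategy is to upgrade the given strong generator of nearly $\lambda$-presentable objects into a strong generator of $\lambda$-generated objects, and then invoke the Gabriel--Ulmer theorem (cited as \cite{AR} in the introduction) that any locally $\lambda$-generated category is locally presentable.

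The technical heart is the claim that in this abelian setting every nearly $\lambda$-presentable object of $\ck$ is in fact $\lambda$-generated. Given a $\lambda$-directed system $(K_i)_{i\in I}$ of subobjects of $K$ with $K=\bigcup_i K_i$ and a morphism $f\colon A\to K$, pulling back along each $K_i\hookrightarrow K$ produces $A_i=f^{-1}(K_i)$, and exactness of $\lambda$-directed colimits yields $A=\colim_i A_i$. Hence it suffices to show that no nearly $\lambda$-presentable object $A$ admits a $\lambda$-directed cover by proper subobjects. Assuming the contrary, I would construct a morphism $A\to\bigoplus_{i\in I}A/A_i$ whose image meets arbitrarily many summands, contradicting near $\lambda$-presentability via Remark~\ref{bpres1}(2). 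The construction adapts Example~\ref{examples}(3): one passes to a cofinal chain in $I$ where each element of $A$ has bounded support in the direct sum of quotients (using the vanishing $\colim_i A/A_i=0$, which follows from applying exactness of $\lambda$-directed colimits to $0\to A_i\to A\to A/A_i\to 0$), and in the uncountable-$\lambda$ case one uses an injective-resolution argument as in that example.

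With this intermediate claim, $\ck$ carries a strong generator of $\lambda$-generated objects; since it is cocomplete and weakly co-wellpowered, it is locally $\lambda$-generated in the sense of \cite{AR}, and hence locally presentable. The main obstacle is the intermediate claim itself: we have only exact $\lambda$-directed colimits (not full AB5), so Freyd--Mitchell element reasoning is not directly available, and the crucial morphism $A\to\bigoplus_i A/A_i$ has to be extracted structurally from the colimit data via a careful support analysis along a cofinal chain in $I$, with the abelian structure of $\ck$ substituting for the concrete element reasoning used in Example~\ref{examples}(3).
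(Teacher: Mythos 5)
Your opening reductions agree with the paper: Proposition~\ref{exact} plus regular factorizations (available in any abelian category) give monomorphisms stable under $\lambda$-directed colimits, and from this one deduces that $\lambda$-directed colimits are exact. But from that point the paper simply observes that $\ck$ is a cocomplete abelian category with a generator in which $\lambda$-directed colimits commute with finite limits, and invokes \cite{PR}~2.2, which says exactly that such a category is locally presentable. You instead try to prove that every nearly $\lambda$-presentable object is $\lambda$-generated and then appeal to the Gabriel--Ulmer result on locally generated categories. That is a genuinely different route, and its logical skeleton is fine (abelian categories with a generator are co-wellpowered, so a strong generator of $\lambda$-generated objects would indeed make $\ck$ locally $\lambda$-generated, hence locally presentable). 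The problem is that the entire weight of the proof now rests on the intermediate claim, and your sketch of it does not go through.

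Concretely: to rule out a $\lambda$-directed cover of $A$ by proper subobjects you want to imitate Example~\ref{examples}(3). That argument has two essential ingredients that are specific to abelian groups (or modules over a countable Noetherian ring): (i) an extension step, where a map defined on the union $A_\omega$ of a strictly increasing chain of subobjects is extended to all of $A$ using an injective object $\bigoplus_n J_n$ --- this needs enough injectives \emph{and} closure of injectives under coproducts, i.e.\ essentially a locally Noetherian Grothendieck category, which is not available before local presentability is established; and (ii) in the uncountable case, the countability of the indecomposable injectives $\mathbb Q$ and $\mathbb Q_p/\mathbb Z_p$ from \cite{Mat}, for which there is no analogue in an abstract abelian category. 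The extension step cannot be avoided by choosing the chain cofinal, because a $\lambda$-directed cover by proper subobjects need not admit a cofinal chain of length $\omega$ (or of length $<\lambda$), and with only $\lambda$-directed (not arbitrary directed) colimits exact you cannot freely pass to longer chains either. So the ``careful support analysis'' you defer to is precisely the missing proof, and there is no evident way to supply it at this stage; the paper's citation of \cite{PR}~2.2 is doing real work that your outline does not replace. If you want a self-contained argument, you should either reproduce the proof of \cite{PR}~2.2 or find another way to exploit exactness of $\lambda$-directed colimits that does not pass through ``nearly $\lambda$-presentable implies $\lambda$-generated'', a statement that is itself not established (and is stronger than what the theorem needs).
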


\begin{proof}
 Consider a nearly locally $\lambda$-presentable abelian category $\ck$.
 According to~\ref{exact}, $\ck$ has $\lambda$-directed unions.
 Following~\cite{Mit} III.1.2 and III.1.9, in any cocomplete abelian
category with monomorphisms closed under directed colimits
the directed colimits are exact (cf.\ \cite{PS} 9.2).
 In the same way we see that in any cocomplete abelian category
with monomorphisms closed under $\lambda$-directed colimits
the $\lambda$-directed colimits are exact (i.e., commute with finite
limits).
 Thus $\ck$ is a cocomplete abelian category with a generator in which
$\lambda$-directed colimits commute with finite limits.
 Following \cite{PR} 2.2, $\ck$ is locally presentable.
\end{proof}

\begin{theo}\label{vp} 
Vop\v enka's principle is equivalent to the fact that every nearly locally presentable category having regular factorizations is locally presentable.
\end{theo}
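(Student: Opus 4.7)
The plan is to prove the equivalence via the two implications separately.

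\emph{Forward direction.} Assume Vop\v enka's principle and let $\ck$ be a nearly locally $\lambda$-presentable category with regular factorizations. By Corollary~\ref{regular}, $\ck$ is bounded, i.e., it has a small dense subcategory. A classical consequence of Vop\v enka's principle (developed in \cite{AR}, Chapter~6) is that every cocomplete weakly co-wellpowered bounded category is locally presentable. Since $\ck$ is cocomplete and weakly co-wellpowered by Definition~\ref{blp}, this yields local presentability of $\ck$ with essentially no further work.

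\emph{Reverse direction.} I would argue contrapositively: assume Vop\v enka's principle fails, and produce a nearly locally presentable category with regular factorizations that is not locally presentable. Under $\neg$VP, by a standard reformulation (\cite{AR}, Chapter~6), there is a proper class of rigid graphs --- pairwise non-isomorphic graphs admitting no non-identity morphisms between them. The plan is to use this rigid class to manufacture a cocomplete category $\ck$ with regular factorizations that is weakly co-wellpowered and has a strong generator of nearly $\lambda$-presentable objects for some $\lambda$, yet is not locally presentable. A natural recipe is to take a well-chosen category of relational structures of a suitably rich signature (possibly passing to a reflective subcategory) so that the rigid class yields a proper class of mutually non-isomorphic nearly $\lambda$-presentable objects, thereby obstructing the existence of any \emph{set} of $\mu$-presentable strong generators for any regular cardinal $\mu$.

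The main obstacle is the reverse direction. The forward direction reduces in one line to Corollary~\ref{regular} combined with the known Vop\v enka-type theorem about bounded cocomplete categories. The work in the converse lies in engineering the counterexample so as to satisfy simultaneously every clause of Definition~\ref{blp} --- in particular weak co-wellpoweredness and the existence of a strong generator of nearly $\lambda$-presentable objects --- while retaining regular factorizations (which should come for free if $\ck$ embeds nicely into a regular ambient category) and failing local presentability. Once such $\ck$ is built, failure of local presentability should follow by a cardinality argument: a proper class of mutually non-isomorphic nearly $\lambda$-presentable objects built from the rigid class cannot be covered by any small set of $\mu$-presentable generators, ruling out local $\mu$-presentability for every $\mu$.
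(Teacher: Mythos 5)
Your forward direction is exactly the paper's: Corollary~\ref{regular} gives boundedness, and under Vop\v enka's principle every cocomplete bounded category is locally presentable (\cite{AR}~6.14). That half is complete.

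The reverse direction, however, has a genuine gap, and not only because the construction is left as a ``plan.'' Your proposed mechanism for defeating local presentability --- that a proper class of mutually non-isomorphic nearly $\lambda$-presentable objects ``cannot be covered by any small set of $\mu$-presentable generators'' --- is a non sequitur. Local $\mu$-presentability only requires \emph{some} set of $\mu$-presentable objects forming a strong generator; it places no bound on how many non-isomorphic nearly $\lambda$-presentable objects the category may contain. Indeed, Example~\ref{examples}(1) of this very paper points out that the locally presentable categories of sets, graphs and posets already have a proper class of non-isomorphic nearly $\lambda$-presentable objects. The actual obstruction in the paper's construction is of a different nature: starting from a rigid class of \emph{connected} graphs $G_i$ (under $\neg$VP), one takes the epireflective subcategory $\ck$ of the regular presheaf category $\MGra$ consisting of multigraphs each of whose components is either terminal or admits no morphism from any $G_i$; the single object $\cdot$ then fails to be $\mu$-presentable for \emph{every} $\mu$ (as in \cite{AR}~6.12), because suitable $\mu$-directed colimits built from the rigid class witness the failure --- and since every object of a locally presentable category is presentable, this one object suffices. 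Separately, you cannot assume regular factorizations ``come for free'' from embedding into a regular ambient category: the paper must explicitly verify that the inclusion $\ck\to\MGra$ preserves regular epimorphisms (using that each $G_i$ is connected and that preimages of components contain edges), and this is where the choice of $\MGra$ over $\Gra$ and the precise form of the reflective subcategory matter. Without the correct obstruction and this verification, your contrapositive argument does not go through.
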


\begin{proof}
Assuming Vop\v enka's principle, every cocomplete bounded category is locally presentable (see \cite{AR} 6.14). Under the negation 
of Vop\v enka's principle, there is a rigid class of connected graphs $G_i$ in the category $\Gra$ of graphs (see \cite{AR} 6.13). This is a rigid class in the category of multigraphs $\MGra$ which, as a presheaf category on $\cdot\rightrightarrows\cdot$, is regular. Let $\ck$ be the full subcategory of $\MGra$ consisting of all the multigraphs in which every connected component is either the terminal multigraph $1$ or there are no morphisms from $G_i$ into it. This is the modification of \cite{AR} 6.12. Like there, 
$\ck$ is an epireflective subcategory of $\MGra$ and thus it is cocomplete. Moreover, $\ck$ is closed under coproducts in $\MGra$. The graphs $\cdot$ and $\cdot\rightarrow\cdot$ are nearly finitely presentable in $\ck$ because every their morphism into a coproduct uniquely factorizes through a coproduct injection. Since the graphs $\cdot$ and $\cdot\rightarrow\cdot$ form a dense subcategory 
in $\ck$, the category $\ck$ is nearly finitely presentable. But, like in \cite{AR} 6.12, the graph $\cdot$ is not presentable 
in $\ck$. Thus $\ck$ is not locally presentable.

Let us prove that $\ck$ is regular, i.e., that regular epimorphisms are stable under pullback. For this, it suffices to show
that the inclusion $\ck\to\MGra$ preserves regular epimorphisms. Assume that $f:K\to L$ is a regular epimorphism in $\ck$. 
Then $f=f_1\coprod f_2:K_1\coprod K_2\to L_1\coprod L_2$ where $L_2$ is a coproduct of copies of $1$ and there is no morphism
$G_i\to L_1$. Then $f_1:K_1\to L_1$ is a regular epimorphism in $\MGra$. Since the full preimage in $K_2$ of each component of $L_2$ contains an edge, $f_2$ 
is a regular epimorphism in $\MGra$ again. Hence $f$ is a regular epimorphism in $\MGra$.
\end{proof}

\begin{exams}\label{various}
{
\em
(1) Analogously, \cite{AR} 6.36 gives a nearly locally presentable category which, under the negation of Vop\v enka's principle, is not bounded.

(2) \cite{AR} 6.38 gives a cocomplete category $\ck$ having a strong generator consisting of nearly $\lambda$-presentable objects which is not complete. But $\ck$ is not weakly co-wellpowered.
}
\end{exams}

\begin{exam}\label{cpo}
{
\em
Let $\CPO$ be the category of chain-complete posets, i.e., posets where every chain has a join. Morphisms are mappings
preserving joins of chains. Every chain complete poset has the smallest element, and the joins of directed sets and morphisms preserve
them. These posets are also called cpo's and play a central role in theoretical computer science, in denotational semantics and domain theory. The category $\CPO$ is cocomplete (see \cite{M}). The coproduct is just the disjoint union with the least elements of each component identified. Thus every finite cpo is nearly finitely presentable in $\CPO$. Epimorphisms are morphisms $f:A\to B$ where
$f(A)$ is directed join dense in $B$, i.e., every $b\in B$ is a join of a directed set $X\subseteq f(A)$. Thus $|B|\leq 2^{|A|}$, which
implies that $\CPO$ is co-wellpowered. The two-element chain $2$ is a strong generator in $\CPO$. In fact, it is a generator
and for each object $B$ and each proper subobject $A$ of $B$ there exists a morphism $2\to B$ which does not factorize through $A$. 
Hence $\CPO$ is nearly locally finitely presentable. But $\CPO$ is not locally presentable (see \cite{AR} 1.18(5)).
}
\end{exam}

\begin{defi}\label{cunion}
{
\em
Let $\ck$ be a category with coproducts. We say that $\ck$ has monomorphisms \textit{stable under special} $\lambda$-\textit{directed colimits} if for every special $\lambda$-directed colimit $\coprod_{j\in J}K_j\to\coprod_{i\in I}K_i$, every morphism
$\coprod_{i\in I}K_i\to K$ whose compositions with $\coprod_{j\in J}K_j\to\coprod_{i\in I}K_i$ are monomorphisms is a monomorphism.
}
\end{defi}

This definition fits with the stability of monomorphisms under $\lambda$-directed colimits provided that coproduct injections in $\ck$ are monomorphisms.

\begin{propo}\label{dual} 
Let $\ck$ be a locally presentable category such that $\ck^{\op}$ has mono\-mor\-phisms stable under special $\lambda$-directed colimits 
for some regular cardinal $\lambda$. Then $\ck$ is equivalent to a complete lattice.
\end{propo}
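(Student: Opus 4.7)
The plan is to show that $\ck$ is a preorder; since $\ck$ is locally presentable, hence complete and cocomplete, any preorder with these properties is a complete lattice.  First I would dualise the hypothesis.  The statement that $\ck^{\op}$ has monomorphisms stable under special $\lambda$-directed colimits translates, inside $\ck$, into the following: for every product $\prod_{i\in I}K_i$ and every morphism $h\colon K\to\prod_{i\in I}K_i$, if each composition $\pi_J\circ h\colon K\to\prod_{j\in J}K_j$ with $|J|<\lambda$ is an epimorphism in $\ck$, then $h$ itself is an epimorphism in $\ck$.

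Assume for contradiction that some hom-set of $\ck$ has at least two elements.  Using local presentability, choose a regular cardinal $\mu\geq\lambda$ so that $\ck$ is locally $\mu$-presentable, and find a $\mu$-presentable object $A$ from the strong generator together with $B\in\ck$ and distinct $f,g\colon A\to B$.  For every cardinal $\kappa$ the product $B^{\kappa}$ exists in $\ck$, and $|\ck(A,B^{\kappa})|=|\ck(A,B)|^{\kappa}\geq 2^{\kappa}$, so these hom-sets grow without bound in $\kappa$.  The aim is to use the dualised condition to obtain a bound on $|\ck(A,B^{\kappa})|$ independent of $\kappa$, which is a contradiction.

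Following the classical Freyd cardinality argument, any essentially small complete category is a preorder, so the cleaner route is to show that the dualised condition forces $\ck$ to be essentially small.  Morally, the condition says that morphisms into a product are ``detected'' by their restrictions to subproducts of size $<\lambda$, in the sense that epicness is tested on such small projections; combined with the (strong epi, mono)-factorisation and well- and co-wellpoweredness automatically available in the locally presentable category $\ck$, this should control the isomorphism classes of objects and yield essential smallness, after which Freyd's theorem gives the preorder conclusion and cocompleteness upgrades it to a complete lattice.

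The main obstacle is bridging the gap between the dualised hypothesis, which constrains only epimorphisms into products, and the control needed over the full hom-sets $\ck(A,B^{\kappa})$.  One route is to split an arbitrary morphism $A\to B^{\kappa}$ through its (strong epi, mono)-factorisation: the strong-epi piece is controlled by the dualised condition applied to the image subobject, while the mono piece is controlled by well-poweredness, together giving a uniform cardinality bound on $\ck(A,B^{\kappa})$ and hence the contradiction.  An alternative, more direct route is to build from $f\neq g$ a morphism $h\colon K\to B^{\kappa}$ (for some auxiliary $K$, perhaps a $\lambda$-directed union of small subproducts inside $B^\kappa$) whose small projections are all epimorphisms but which is itself not epimorphic --- the analogue of the ``small-support sequences'' counterexample familiar from $\Set$ --- thereby violating the dualised condition outright.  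I expect the Freyd-style essential-smallness route to be the cleaner of the two.
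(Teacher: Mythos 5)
Your dualisation of the hypothesis is correct, and your overall strategy --- show that $\ck$ is a preorder by bounding $|\ck(A,B^{\kappa})|$ for a presentable $A$ --- is the strategy of the argument the paper actually invokes (the proof of \cite{AR}~1.64, adapted to the weaker hypothesis). But your proposal stops exactly at the step that constitutes the whole content of that argument, and neither of the two routes you sketch closes the gap. The first route fails as stated: weak co-wellpoweredness bounds the number of strong quotients $A\twoheadrightarrow C$ uniformly in $\kappa$, but for a fixed $C$ the number of monomorphisms $C\to B^{\kappa}$ is not bounded independently of $\kappa$ (in $\Set$ there are $2^{\kappa}$ monomorphisms $1\to 2^{\kappa}$), and it is unclear what ``applying the dualised condition to the image subobject'' means: the condition only speaks of morphisms into a product all of whose small projections are epimorphisms, and the strong-epi part $A\to C$ of a factorisation is not such a morphism. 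The second route names the right kind of object but supplies no construction.

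The missing construction is the following. Fix $j_0\in I$ and, for each $J\subseteq I$ with $j_0\in J$ and $|J|<\lambda$, let $s_J:K^J\to K^I$ be the section of the projection $p_J:K^I\to K^J$ that repeats the $j_0$-th coordinate on $I\setminus J$. These form a $\lambda$-directed system of (split, hence regular) subobjects of $K^I$; since $\ck$ is locally presentable, monomorphisms are stable under $\lambda$-directed colimits (\cite{AR}~1.62), so the induced morphism $m:\colim_J K^J\to K^I$ is a regular monomorphism, while each composition $p_Jm$ is a split epimorphism. The dualised hypothesis now forces $m$ to be an epimorphism, hence an isomorphism. Taking $A$ $\lambda$-presentable, one gets $\ck(A,K^I)=\bigcup_J\ck(A,K^J)$, whence $|\ck(A,K)|^{|I|}\le |I|^{<\lambda}\cdot\sup_J|\ck(A,K)|^{|J|}$; if $|\ck(A,K)|\ge 2$ this is violated for $I$ large enough. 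Hence all hom-sets out of $\lambda$-presentable objects are at most singletons, so $\ck$ is a preorder (the $\lambda$-presentables form a generator), and a locally presentable preorder is essentially small, complete and cocomplete, i.e.\ equivalent to a complete lattice. This directed union of split subpowers exhausting $K^I$ is precisely the bridge your plan identifies as missing but does not build.
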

\begin{proof}
It follows from \cite{AR} 1.64. In more detail, the proof considers a special $\lambda$-codirected limit 
$p_J:K^I\to K^J$ and $m:\colim D\to K^I$. Using local presentability of $\ck$, $m$ is shown to be a regular monomorphism.
Since $\ck^{\op}$ has monomorphisms stable under special $\lambda$-directed colimits, $m$ is an epimorphism because the compositions 
$p_Jm$ are epimorphisms. Thus $m$ is an isomorphism, which yields the proof.
\end{proof}

\begin{rem}\label{c-acc}
{
\em
\cite{H} calls a category $\ck$ coproduct-accessible if it has coproducts and a set of coproduct-presentable objects such that every object
of $\ck$ is a coproduct of objects from this set. Any cocomplete coproduct-accessible category is nearly locally $\aleph_0$-presentable.
}
\end{rem}
 
\section{Properties of nearly locally presentable categories}

\begin{rem}\label{colimdense}
{
\em
 The following observations will be useful below.

(1) A full subcategory $\ca$ of $\ck$ is called \textit{weakly colimit-dense} if $\ck$ is the smallest full subcategory of $\ck$ containing $\ca$ and closed under colimits. Any weakly colimit-dense full subcategory is a strong generator (see 
\cite{S} 3.7). Conversely, as we will see in~\ref{weaklycolimdense}, in a cocomplete and weakly co-wellpowered category every strong generator is weakly colimit-dense (cf.\ \cite{K} 3.40 or \cite{S} 3.8).

(2) Compact Hausdorff spaces form a cocomplete, regular and weakly co-well\-po\-we\-red category with a strong generator which is not nearly
locally presentable. This follows from \ref{all} and the fact that $1$ is not nearly presentable in compact Hausdorff spaces. 

(3) Recall that a generator $\ca$ of $\ck$ is strong if and only if for each object $K$ and each proper subobject of $K$ there exists a morphism $A\to K$ with $A\in\ca$ which does not factorize through that subobject.
If $\ck$ has equalizers then every small full subcategory $\ca$ of $\ck$ satisfying this condition is a strong generator. Given two morphisms $f,g:K\to L$, it suffices to apply this condition to the equalizer of $f$ and~$g$.

In a cocomplete category, a generator $\ca$ is strong iff every object is an extremal quotient of a coproduct of objects from $\ca$
(see \cite{AR} 0.6). Recall that an epimorphism $f:K\to L$ is extremal if any monomorphism $L'\to L$ through which $f$ factorizes
is an isomorphism.
}
\end{rem}

\begin{lemma}\label{weaklycolimdense}
Let $\ca$ be a strong generator in a cocomplete, weakly co-wellpovered category~$\ck$.
Then $\ca$ is weakly colimit-dense in~$\ck$.
\end{lemma}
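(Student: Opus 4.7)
Let $\cb$ be the smallest full subcategory of $\ck$ containing $\ca$ and closed under small colimits; I aim to show $\cb=\ck$. By the last paragraph of Remark~\ref{colimdense}(3), every $K\in\ck$ receives an extremal epimorphism
\[
e : P := \coprod_{A\in\ca,\,f:A\to K} A \longrightarrow K
\]
from the coproduct indexed by all pairs $(A,f)$ with $A\in\ca$ and $f\in\ck(A,K)$; moreover $P\in\cb$ as a coproduct of objects of $\ca$.

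The plan is to realize $K$ inside $\cb$ by a transfinite iteration of joint coequalizers, with weak co-wellpoweredness providing the stopping condition. Set $Q_0:=P$ and $m_0:=e$. At a successor stage, given $m_\alpha:Q_\alpha\to K$ an extremal epi with $Q_\alpha\in\cb$, let $q_{\alpha+1}:Q_\alpha\to Q_{\alpha+1}$ be the joint coequalizer in $\ck$ of all parallel pairs $u,v:A\rightrightarrows Q_\alpha$ with $A\in\ca$ and $m_\alpha u=m_\alpha v$, and let $m_{\alpha+1}:Q_{\alpha+1}\to K$ be the unique factor satisfying $m_{\alpha+1}q_{\alpha+1}=m_\alpha$. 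At a limit $\lambda$, set $Q_\lambda:=\colim_{\alpha<\lambda}Q_\alpha$ and let $m_\lambda$ be induced by $(m_\alpha)_{\alpha<\lambda}$. Each $Q_\alpha$ lies in $\cb$, being a colimit of small diagrams with values in $\cb$.

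Inductively, each $m_\alpha$ remains an extremal epi and each structure map $\pi_\alpha:P\to Q_\alpha$ remains a strong epi. At successor stages, $q_{\alpha+1}$ is regular (hence strong), and strong epimorphisms are closed under composition, so $\pi_{\alpha+1}=q_{\alpha+1}\pi_\alpha$ is strong; and if $m_{\alpha+1}=mh$ for some monomorphism $m$, then $m_\alpha=m_{\alpha+1}q_{\alpha+1}=m(hq_{\alpha+1})$ forces $m$ to be an isomorphism by extremality of $m_\alpha$, so $m_{\alpha+1}$ is extremal. At limit stages, a standard diagonal-fill-in against the colimit cocone $(Q_\alpha\to Q_\lambda)_\alpha$, using right-cancellability of monomorphisms to match lifts produced stage by stage, shows both that $\pi_\lambda$ is strong and that $m_\lambda$ is extremal. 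The chain $(\pi_\alpha)_\alpha$ is thus an increasing chain in the poset of strong quotients of $P$, which by weak co-wellpoweredness is a set; it must therefore stabilize at some ordinal $\alpha^*$, meaning $q_{\alpha^*+1}$ is an isomorphism.

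A joint coequalizer that is an isomorphism forces every defining pair $u,v:A\rightrightarrows Q_{\alpha^*}$ with $A\in\ca$ and $m_{\alpha^*}u=m_{\alpha^*}v$ to already satisfy $u=v$. Hence $\ck(A,m_{\alpha^*})$ is injective for every $A\in\ca$, and since $\ca$ is a generator, $m_{\alpha^*}$ is a monomorphism. An extremal epimorphism that is also a monomorphism is an isomorphism, so $K\cong Q_{\alpha^*}\in\cb$, completing the proof. The main obstacle is the limit-stage bookkeeping: one must verify carefully that strong-epimorphicity of $\pi_\lambda$ and extremality of $m_\lambda$ genuinely survive the directed colimits, so that weak co-wellpoweredness can be legitimately invoked to bound and hence terminate the iteration.
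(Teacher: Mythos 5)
Your proposal is correct and follows essentially the same route as the paper: start from the extremal epimorphism $e:P\to K$ with $P$ a coproduct of objects of $\ca$, transfinitely coequalize pairs from $\ca$ that $m_\alpha$ merges (taking colimits at limit ordinals), and use weak co-wellpoweredness of strong quotients of $P$ to force termination, at which point $m_{\alpha^*}$ is a monomorphism and hence an isomorphism. The only difference is cosmetic — you take the joint coequalizer of all offending pairs at each stage, where the paper coequalizes one pair at a time — and your treatment of the limit stages and of the stabilization argument is a more detailed rendering of what the paper leaves implicit.
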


\begin{proof}
Let $K$ be an object of $\ck$. Following \ref{colimdense}(3), there is an extremal epimorphism
$e_0:K_0\to K$ where $K_0$ is a coproduct of objects of $\ca$. If $e_0$ is a monomorphism, it is an isomorphism. Thus $K$ belongs
to the closure of $\ca$ under colimits.

If $e_0$ is not a monomorphism, there are distinct morphisms $f_1$, $f_2:M\to K_0$ such that
$e_0f_1=e_0f_2$. Since $\ca$ is a generator of $\ck$, we can assume that $M\in\ca$.  Let $e_{01}:K_0\to K_1$ be the coequalizer of $f_1$, $f_2$ and $e_1:K_1\to K$ the induced morphism. If $e_1$ is a monomomorphism, then it is an isomorphism and $K$ belongs to the iterated closure of $\ca$ under colimits.
If $e_1$ is not an monomorphism, we repeat the procedure. In this way,
we get the chain 
$$K_0 \xrightarrow{\ e_{01}\ } K_1 \xrightarrow{\ e_{12}\ } \dots
$$
consisting of strong epimorphisms where in limit steps we take colimits. Since $\ck$ is weakly co-wellpowered, the construction stops and we get that $K$ belongs to the iterated colimit closure of $\ca$.
\end{proof}

\begin{propo}\label{complete}
Every nearly locally presentable category is complete.
\end{propo}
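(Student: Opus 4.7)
The plan is to establish completeness by directly constructing products (and then general small limits); together with the given cocompleteness this yields all limits. I begin by invoking Lemma \ref{weaklycolimdense}, so that the strong generator $\ca$ of nearly $\lambda$-presentable objects is weakly colimit-dense in $\ck$; in particular, by Remark \ref{colimdense}(3), every object of $\ck$ is an extremal quotient of a coproduct of objects of $\ca$.

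For a family $(K_i)_{i\in I}$, I would apply Freyd's adjoint functor theorem to the diagonal $\Delta\colon\ck\to\ck^I$; this preserves small colimits automatically, so only the solution-set condition needs verification. Form
$$
Q \;=\; \coprod_{A\in\ca}\,\coprod_{(x_i)\in\prod_i\ck(A,K_i)} A,
$$
together with its tautological cone $q_i\colon Q\to K_i$ built componentwise from the $x_i$. By weak co-wellpoweredness, the isomorphism classes of strong quotients $p_\alpha\colon Q\to Q_\alpha$ form a set; those $Q_\alpha$ through which every $q_i$ factors as $q_i=q_{\alpha,i}\,p_\alpha$ yield candidate cones $(q_{\alpha,i})$. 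Given any cone $(f_i\colon X\to K_i)$, choose an extremal epi $e\colon\coprod_j A_j\to X$ with $A_j\in\ca$; the cone $(f_ie)_i$ is tautologically of the form $q_is$ for a canonical $s\colon\coprod_j A_j\to Q$. Form the pushout $W$ of $s$ along $e$, with induced maps $p\colon Q\to W$ and $r\colon X\to W$. Since strong epimorphisms are stable under pushout, $p$ is strong, making $W$ one of the $Q_\alpha$; the pushout property supplies $v_i\colon W\to K_i$ with $v_ip=q_i$ and $v_ir=f_i$, so $(f_i)$ factors through the candidate cone via $r$. The same scheme, applied to $\Delta\colon\ck\to\ck^{\cj}$ for any small $\cj$, yields arbitrary small limits.

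The main obstacle, and the place where the hypotheses really earn their keep, is the verification that the pushout step genuinely produces a strong quotient of $Q$ (and hence one of the set-many $Q_\alpha$): this is exactly where weak co-wellpoweredness combines with the strong-generator property to cut the naively proper class of cones down to a genuine solution set. A slicker alternative would be to show that $\ck$ is total --- i.e., the Yoneda embedding $\ck\to\Set^{\ck^{\op}}$ admits a left adjoint --- directly from weak colimit-density and weak co-wellpoweredness, since every total category is automatically complete.
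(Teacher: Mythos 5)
Your overall strategy---reduce to a solution-set condition for the diagonal $\Delta\colon\ck\to\ck^{\cj}$ using weak colimit-density of the strong generator---is essentially an attempt to reprove the result the paper simply cites: after Lemma~\ref{weaklycolimdense}, the paper invokes \cite{AHR}, Theorem~4 (any cocomplete, weakly co-wellpowered category with a small weakly colimit-dense subcategory is complete) and stops. So the skeleton is right, but your execution has a genuine gap exactly at the point you yourself flag as ``the main obstacle,'' and flagging it is not the same as closing it.

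The gap is this: by Remark~\ref{colimdense}(3) (i.e.\ \cite{AR}~0.6) you only get that $e\colon\coprod_j A_j\to X$ is an \emph{extremal} epimorphism, not a strong one. Strong epimorphisms are indeed stable under pushout, but extremal epimorphisms are not (in general they are not even closed under composition), and the upgrade from extremal to strong requires pullbacks---whose existence is precisely what you are trying to prove. The paper is careful about this elsewhere: in Lemma~\ref{all} it first invokes completeness to obtain pullbacks and only then cites \cite{AR}~0.5 to replace ``extremal quotient'' by ``strong quotient.'' Using that replacement inside the proof of completeness is circular. Without it, the pushout $p\colon Q\to W$ is merely an epimorphism, and weak co-wellpoweredness bounds only the \emph{strong} quotients of $Q$, so your candidate cones need not form a set. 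Your closing suggestion (prove totality from weak colimit-density and weak co-wellpoweredness) is again just a pointer to the nontrivial content of \cite{AHR}/\cite{S}, whose actual proofs proceed by a transfinite iteration of strong-epimorphic quotients in the spirit of Lemma~\ref{weaklycolimdense}, not by a single pushout. As written, the proof is incomplete; the honest fix is either to carry out that transfinite argument or to do what the paper does and cite \cite{AHR}, Theorem~4.
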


\begin{proof}
Let $\ck$ be a nearly locally presentable category. 
Following \ref{weaklycolimdense}, $\ck$ has a weakly colimit-dense small subcategory.
By \cite{AHR} Theorem 4, any cocomplete weakly co-wellpowered category
with a weakly colimit-dense small subcategory is complete.
\end{proof}
 
\begin{lemma}\label{all}
Let $\ck$ be a nearly locally presentable category such that coproduct injections are monomorphisms. Then every object of $\ck$ is nearly presentable.
\end{lemma}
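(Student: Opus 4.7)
The plan is to combine Lemma~\ref{weaklycolimdense} with Remark~\ref{wgen}(1) and Lemma~\ref{up}. Fix a regular cardinal $\lambda$ and a strong generator $\ca$ of $\ck$ consisting of nearly $\lambda$-presentable objects, and let $K$ be an arbitrary object of $\ck$. Reading off the proof of Lemma~\ref{weaklycolimdense}, one exhibits $K$ as the final term $K_\alpha$ of a transfinite chain of strong epimorphisms in which $K_0$ is a coproduct of some set of objects of $\ca$ of cardinality $\kappa$, every successor $K_{i+1}$ is the coequalizer of a pair of morphisms $A_i\rightrightarrows K_i$ with $A_i\in\ca$, and $K_\beta=\colim_{i<\beta}K_i$ at limit ordinals. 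The length $\alpha$ is bounded by weak co-wellpoweredness applied to the strong quotients of $K_0$, and the width $\kappa$ is bounded by local smallness; both depend only on $K$ and~$\ca$.

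With these data fixed, choose a regular cardinal $\mu>\lambda+|\alpha|+\kappa$ and prove by transfinite induction on $i\leq\alpha$ that each $K_i$ is nearly $\mu$-presentable; taking $i=\alpha$ then gives the conclusion. For the base case, each $A\in\ca$ is nearly $\lambda$-presentable and hence nearly $\mu$-presentable by Lemma~\ref{up}, and $K_0$ is a $\kappa$-indexed, hence $\mu$-small, coproduct of such objects, so is nearly $\mu$-presentable by Remark~\ref{wgen}(1). A successor step presents $K_{i+1}$ as the coequalizer of a finite diagram on nearly $\mu$-presentable objects, covered by the same remark. At a limit $\beta\leq\alpha$, the chain defining $K_\beta$ has at most $|\beta|\leq|\alpha|<\mu$ morphisms, so $K_\beta$ is a $\mu$-small colimit of nearly $\mu$-presentable objects and the induction continues.

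The argument is thus essentially cardinality bookkeeping: the real content lies in Lemma~\ref{weaklycolimdense}, which reduces every object to an iterated colimit from $\ca$ of bounded ``shape'', and in the fact that nearly $\mu$-presentability is closed under $\mu$-small colimits. I do not anticipate a serious obstacle beyond matching the cardinal parameters. The hypothesis that coproduct injections are monomorphisms is the ambient setting under which the factorization formulation of Remark~\ref{bpres1}(2) is unambiguous and Remark~\ref{wgen}(2) is available; it ensures the closure operations used above interact cleanly with near-presentability, although it is not invoked in any further delicate way in the induction itself.
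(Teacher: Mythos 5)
Your argument is correct, but it is genuinely different from the paper's. The paper's proof is a two-line reduction: by Proposition~\ref{complete} the category has pullbacks, so by \cite{AR}~0.6 and~0.5 every object is a \emph{strong} quotient of a single set-indexed coproduct of objects of $\ca$; one then picks $\mu$ larger than $\lambda$ and the size of that coproduct and applies both parts of Remark~\ref{wgen} --- part~(1) to make the coproduct nearly $\mu$-presentable and part~(2) to pass to the strong quotient. Part~(2) is exactly where the hypothesis that coproduct injections are monomorphisms enters. You instead unwind the transfinite tower from the proof of Lemma~\ref{weaklycolimdense} and run a cardinality induction using only Remark~\ref{wgen}(1) and Lemma~\ref{up}; all the bookkeeping checks out (the bound on the length of the tower via weak co-wellpoweredness implicitly uses that transfinite composites of strong epimorphisms are strong epimorphisms, but the paper's own proof of Lemma~\ref{weaklycolimdense} already relies on this). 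What your route buys is notable: since Remark~\ref{wgen}(1) does not require coproduct injections to be monomorphisms, your proof establishes the conclusion for \emph{any} nearly locally presentable category, i.e.\ the hypothesis on coproduct injections is not actually needed --- your closing remark that it is ``not invoked in any delicate way'' understates this. What the paper's route buys is brevity and the avoidance of any transfinite induction, at the price of genuinely using the extra hypothesis.
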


\begin{proof}
Let $\ca$ be a strong generator of $\ck$ consisting of nearly presentable objects. Following~\ref{complete}, $\ck$ has pullbacks.
Every object of $\ck$ is then a strong quotient
of a coproduct of objects from $\ca$ (see \cite{AR} 0.6 and 0.5). The result follows from \ref{wgen}. 
\end{proof}

\begin{lemma}\label{colwg}
Let $\cl$ be a cocomplete well-powered and weakly co-wellpowered category with (strong epimorphism, monomorphism)-factorization. 
Let $\ca$ be a set of nearly presentable objects in $\cl$.
Let $\ck$ consist of strong quotients of coproducts of objects from $\ca$.
The $\ck$ is nearly locally presentable.
\end{lemma}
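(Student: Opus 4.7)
The plan is to verify the three defining clauses of Definition~\ref{blp} for $\ck$: cocompleteness, weak co-wellpoweredness, and the existence of a strong generator consisting of nearly $\lambda$-presentable objects for a common regular cardinal~$\lambda$. The key preparatory move is to show that $\ck$ is closed under the relevant colimits in $\cl$ and that strong epimorphisms in $\ck$ coincide with strong epimorphisms in $\cl$ whose domain and codomain lie in $\ck$; once this is in hand, all the assumed properties of $\cl$ transfer to $\ck$.

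For cocompleteness I would show that $\ck$ is closed in $\cl$ under coproducts and coequalizers. Given $(K_j)_{j\in J}$ in $\ck$ with strong epimorphisms $e_j:C_j\to K_j$ from coproducts $C_j$ of objects of $\ca$, the morphism $\coprod_j e_j:\coprod_j C_j\to\coprod_j K_j$ is again a strong epimorphism, since the left class of the (strong epi, mono)-factorization system is closed under colimits in the arrow category (equivalently, one assembles a diagonal fill-in against any monomorphism component-wise from the fill-ins for each $e_j$). Hence $\coprod_j K_j\in\ck$. For a parallel pair $f,g:K_1\to K_2$ in $\ck$, the coequalizer in $\cl$ is a regular, hence strong, epimorphism $K_2\to K_3$; composing with $C_2\to K_2$ yields a strong epimorphism from a coproduct of $\ca$-objects onto $K_3$, using that strong epimorphisms compose. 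Thus $K_3\in\ck$, so $\ck$ is cocomplete with colimits computed in~$\cl$; in particular, $\ck$ is closed in $\cl$ under strong quotients by the same composition argument.

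Next I would identify strong epimorphisms in $\ck$ with strong epimorphisms of $\cl$ between $\ck$-objects. Factor a $\ck$-strong epi $e:K\to K'$ in $\cl$ as $e=m\circ e_1$ with $e_1:K\to K''$ strong epi and $m:K''\to K'$ mono; since $\ck$ is closed under strong quotients in $\cl$, $K''\in\ck$, so $m$ is a monomorphism in $\ck$, and the $\ck$-strong epi property of $e$ forces $m$ to be an isomorphism. Consequently, weak co-wellpoweredness of $\ck$ follows from that of $\cl$. Finally, pick a regular cardinal $\lambda$ bounding all $\lambda_A$ such that $A\in\ca$ is nearly $\lambda_A$-presentable in $\cl$ (possible because $\ca$ is a set, using Lemma~\ref{up}); since coproducts in $\ck$ coincide with those in $\cl$, each $A\in\ca$ remains nearly $\lambda$-presentable as an object of $\ck$. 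By construction every object of $\ck$ is a strong (hence extremal) quotient of a coproduct of $\ca$-objects, so $\ca$ is a strong generator of $\ck$ by Remark~\ref{colimdense}(3), completing the verification. I expect the main obstacle to be the bidirectional transfer of strong epimorphisms between $\cl$ and $\ck$, since this is what tethers weak co-wellpoweredness of $\ck$ to the hypotheses on $\cl$; the remaining steps are then largely structural.
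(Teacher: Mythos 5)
Your proof is correct and arrives at the same three checkpoints as the paper (cocompleteness, weak co-wellpoweredness, and a strong generator of nearly presentable objects), but it gets there by a more self-contained route. The paper's entire argument is a citation: by \cite{AHS}~16.8 the subcategory $\ck$ is coreflective in $\cl$, hence cocomplete and weakly co-wellpowered, and $\ca$ is a strong generator. You instead verify directly that $\ck$ is closed under coproducts and coequalizers in $\cl$ (coproducts and composites of strong epimorphisms are strong epimorphisms), which gives cocompleteness without ever constructing the coreflector; in particular your argument makes no use of the well-poweredness of $\cl$, which enters AHS~16.8 only through the existence of the coreflection. Your explicit identification of strong epimorphisms of $\ck$ with strong epimorphisms of $\cl$ between $\ck$-objects, via the (strong epi, mono)-factorization and closure of $\ck$ under strong quotients, is a genuine addition: the paper asserts weak co-wellpoweredness of $\ck$ without explaining why strong quotients in $\ck$ are controlled by those in $\cl$, and this is exactly the missing link. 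Likewise, your uniformization of the presentability ranks over the set $\ca$ via Lemma~\ref{up}, and the remark that near $\lambda$-presentability transfers from $\cl$ to $\ck$ because coproducts agree, are details the paper leaves unsaid but the statement (which fixes no $\lambda$) requires. The one point both treatments leave implicit is that the strong epimorphism $C\to K$ witnessing $K\in\ck$ is extremal \emph{in} $\ck$ rather than merely in $\cl$ (Remark~\ref{colimdense}(3) is applied inside $\ck$, where a priori there could be more monomorphisms than those inherited from $\cl$); your factorization argument is the natural tool for handling this, so if anything you are better positioned than the paper to make that step explicit.
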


\begin{proof}
Following \cite{AHS} 16.8, $\ck$ is a coreflective full subcategory of $\cl$.
Hence it is cocomplete and weakly co-wellpowered.
$\ca$ is a strong generator in $\ck$ consisting of nearly presentable objects.
Thus $\ck$ is nearly locally presentable.
\end{proof}

\begin{propo}\label{refl}
Let $\ck$ be a reflective subcategory of a locally $\lambda$-presentable category such that the inclusion $G:\ck\to\cl$ is conservative
and sends special $\lambda$-directed colimits to $\lambda$-directed colimits. If $\ck$ is complete, cocomplete and weakly co-wellpowered then it is nearly locally $\lambda$-presentable.
\end{propo}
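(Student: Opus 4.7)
Let $F \dashv G$ be the reflection adjunction, with $G$ the (fully faithful) inclusion, and fix a strong generator $\ca_0$ of $\cl$ consisting of $\lambda$-presentable objects, which exists because $\cl$ is locally $\lambda$-presentable. My plan is to show that the essentially small full subcategory $\ca \subseteq \ck$ with objects $\{F(A) : A \in \ca_0\}$ is a strong generator of $\ck$ consisting of nearly $\lambda$-presentable objects; combined with the standing hypotheses that $\ck$ is cocomplete and weakly co-wellpowered, Definition \ref{blp} then yields the conclusion.

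For the strong generator part, the adjunction gives a natural isomorphism $\ck(F(A), K) \cong \cl(A, G(K))$ for $A \in \ca_0$ and $K \in \ck$, so the canonical comparison functor $E_\ca: \ck \to \Set^{\ca^{\op}}$ is naturally isomorphic to the composite $E_{\ca_0} \circ G$, where $E_{\ca_0}: \cl \to \Set^{\ca_0^{\op}}$ is the analogous functor associated with $\ca_0$. Since $G$ is fully faithful (and in particular conservative, as also assumed in the statement) and $E_{\ca_0}$ is faithful and conservative (because $\ca_0$ is a strong generator of $\cl$), the composite $E_\ca$ is again faithful and conservative, so $\ca$ is a strong generator of $\ck$.

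For the nearly $\lambda$-presentability part, fix $A \in \ca_0$ and a special $\lambda$-directed colimit $\coprod_{j \in J} K_j \to \coprod_{i \in I} K_i$ in $\ck$. By hypothesis $G$ sends this diagram to a $\lambda$-directed colimit in $\cl$, and $A$ is $\lambda$-presentable in $\cl$, so chaining with the adjunction isomorphism gives
\begin{align*}
\ck(F(A), \coprod_{i \in I} K_i)
&\cong \cl(A, G(\coprod_{i \in I} K_i)) \\
&\cong \colim_{|J|<\lambda} \cl(A, G(\coprod_{j \in J} K_j)) \\
&\cong \colim_{|J|<\lambda} \ck(F(A), \coprod_{j \in J} K_j).
\end{align*}
By Remark \ref{bpres1}(1), this is exactly what it means for $F(A)$ to be nearly $\lambda$-presentable. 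The main subtlety is purely bookkeeping: recognizing that the hypothesis on $G$ is precisely what is needed to transport $\lambda$-presentability in $\cl$ into nearly $\lambda$-presentability of $F(A)$ across the adjunction. The completeness hypothesis on $\ck$ is not used in this argument; by Proposition \ref{complete} it is in fact redundant.
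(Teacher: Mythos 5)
Your proof is correct, and its first half (nearly $\lambda$-presentability of the objects $FA$ via the adjunction isomorphism $\ck(FA,-)\cong\cl(A,G-)$ together with the hypothesis that $G$ sends special $\lambda$-directed colimits to $\lambda$-directed ones) is exactly the paper's argument. Where you genuinely diverge is in verifying that $\ca=\{FA:A\in\ca_0\}$ is a strong generator. The paper invokes the subobject criterion of Remark~\ref{colimdense}(3): for every proper subobject $m:K'\to K$ in $\ck$ it produces a morphism from some $FL$ to $K$ not factoring through $m$ (conservativity of $G$ makes $Gm$ a non-invertible monomorphism, local $\lambda$-presentability of $\cl$ gives $p:L\to GK$ not factoring through $Gm$, and one transposes). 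That criterion needs equalizers, which is precisely where the completeness hypothesis enters the paper's proof. You instead check faithfulness and conservativity of $E_\ca$ directly, by comparing it with the composite of the faithful conservative functors $G$ and $E_{\ca_0}$; strictly speaking the isomorphism should read $F^\ast\circ E_\ca\cong E_{\ca_0}\circ G$, where $F^\ast:\Set^{\ca^{\op}}\to\Set^{\ca_0^{\op}}$ is restriction along $F:\ca_0\to\ca$ (the two functors you equate have different codomains), but since $F$ is surjective on objects this still yields faithfulness and conservativity of $E_\ca$. This route buys you something real: completeness is never used, and your remark that it is redundant by Proposition~\ref{complete} (which is proved earlier and independently of~\ref{refl}) is sound. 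One caution: you call $G$ ``fully faithful,'' but the reflective subcategory here is not assumed full --- that is exactly why conservativity is a separate hypothesis, and in the application in Corollary~\ref{functor} the functor $G=E_\ca$ is generally not full. Your argument only ever uses faithfulness and conservativity of $G$, so nothing breaks, but the parenthetical should be deleted.
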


\begin{proof}
Let $F$ be a left adjoint to $G$ and consider a $\lambda$-presentable object $L$ in $\cl$. Since $\ck(FL,-)\cong\cl(L,G-)$
and $G$ sends special $\lambda$-directed colimits to $\lambda$-directed colimits, $FL$ is nearly $\lambda$-presentable in $\ck$.  
We prove that the objects $FL$, where $L$ ranges over $\lambda$-presentable objects in $\cl$, form a strong generator of $\ck$.
We use the argument from the end of the proof of 2.9 in \cite{AR1}, which we repeat. Following \ref{colimdense}(3), it suffices
to show that for every proper subobject $m:K'\to K$ in $\ck$ there exists a morphism from some $FL$ to $K$, where $L$ is 
$\lambda$-presentable in $\cl$, not factorizing through $m$. We know that $Gm$ is a monomorphism but not an isomorphism
because $G$ is conservative. Since $\cl$ is locally $\lambda$-presentable, there exists a morphism $p:L\to GK$, $L$ $\lambda$-presentable
in $\cl$, that does not factorize through $Gm$. The the corresponding $\tilde{p}:FL\to K$ does not factorize through $m$.
\end{proof}

\begin{coro}\label{functor}
Let $\ck$ be a nearly locally $\lambda$-presentable category and $\cc$ be a small category. Then the functor category $\ck^\cc$ is nearly locally $\lambda$-presentable.
\end{coro}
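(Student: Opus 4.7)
My plan is to verify Definition~\ref{blp} directly for $\ck^\cc$. Colimits in a functor category are pointwise, so $\ck^\cc$ is cocomplete. Fix a strong generator $\ca$ of $\ck$ consisting of nearly $\lambda$-presentable objects. For each $c\in\cc$, the evaluation functor $\mathrm{ev}_c\colon\ck^\cc\to\ck$ has a left adjoint $L_c$ given by the copower $L_c(X)(c')=\coprod_{\cc(c,c')}X$, which exists since $\ck$ is cocomplete. My candidate strong generator of $\ck^\cc$ is $\cg=\{L_cA : c\in\cc,\ A\in\ca\}$.

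Near $\lambda$-presentability of each $L_cA$ is immediate from the adjunction together with the fact that coproducts in $\ck^\cc$ are pointwise: one has $\ck^\cc(L_cA,\coprod_i F_i)\cong\ck(A,\coprod_i F_i(c))$, so a special $\lambda$-directed colimit of subcoproducts in $\ck^\cc$ restricts at $c$ to a special $\lambda$-directed colimit in $\ck$, where near $\lambda$-presentability of $A$ applies. For $\cg$ to be a strong generator I invoke Remark~\ref{colimdense}(3): by Proposition~\ref{complete}, $\ck$ is complete, so $\ck^\cc$ has equalizers pointwise. Given a proper subobject $m\colon G\to F$ in $\ck^\cc$, since monomorphisms in $\ck^\cc$ are pointwise, some component $m_c$ must be a proper subobject of $F(c)$ in $\ck$. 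The strong generator $\ca$ then supplies $h\colon A\to F(c)$ not factoring through $m_c$, whose mate $\tilde h\colon L_cA\to F$ cannot factor through $m$.

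The main obstacle will be weak co-wellpoweredness of $\ck^\cc$. I plan to first show that strong epimorphisms in $\ck^\cc$ coincide with the pointwise strong epimorphisms. For the forward direction, take the pointwise (strong epi, mono)-factorization in $\ck$ (available because $\ck$ is complete and cocomplete) and assemble it, via uniqueness of diagonal fill-ins, into a natural factorization $f=me$ in $\ck^\cc$; the factor $m$ is a pointwise, hence global, monomorphism, so it must be an isomorphism because $f$ is a strong epi. For the reverse direction, the pointwise diagonals against a monomorphism in $\ck^\cc$ are unique and therefore automatically natural. Granted this characterization, two strong epis $f\colon F\to G$ and $f'\colon F\to G'$ represent the same strong quotient of $F$ in $\ck^\cc$ if and only if each pair $(f_c,f'_c)$ does so in $\ck$ (uniqueness of the comparison isomorphism forces its naturality). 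Hence the strong quotients of $F$ in $\ck^\cc$ inject into $\prod_{c\in\cc}\mathrm{Quot}_s(F(c))$, a set since $\cc$ is small and $\ck$ is weakly co-wellpowered. This completes the verification.
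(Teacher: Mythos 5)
Your proof is correct, but it is organized quite differently from the paper's. The paper does not build a strong generator for $\ck^\cc$ by hand: it establishes weak co-wellpoweredness via the pointwise characterization of strong epimorphisms (exactly as you do), and then invokes Proposition~\ref{refl}, using Remark~\ref{blp1} to realize $\ck$ as a reflective subcategory of the presheaf category $\cl=\Set^{\ca^{\op}}$ whose inclusion is conservative and sends special $\lambda$-directed colimits to $\lambda$-directed colimits; this exhibits $\ck^\cc$ as such a reflective subcategory of the locally $\lambda$-presentable category $\cl^\cc$, and \ref{refl} does the rest. You instead verify Definition~\ref{blp} directly, taking as generator the objects $L_cA$ with $L_c$ left adjoint to evaluation, checking near $\lambda$-presentability through the adjunction and pointwiseness of coproducts, and checking strongness via the proper-subobject criterion of Remark~\ref{colimdense}(3). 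Your route is more elementary and self-contained (it does not need \ref{refl} or the reflective embedding, only \ref{complete} to get pointwise monomorphisms and equalizers), and it produces an explicit generator; the paper's route is shorter given the machinery already in place and avoids your most delicate step, the assembly of pointwise (strong epi, mono)-factorizations into a natural one. One small imprecision: (strong epi, mono)-factorizations in $\ck$ do not follow from completeness and cocompleteness alone; you also need weak co-wellpoweredness (or well-poweredness) for the transfinite coequalizer-of-kernel-pair construction, as in the proof of Lemma~\ref{weaklycolimdense}, to terminate. Since $\ck$ is weakly co-wellpowered by hypothesis, this is available, but you should cite that hypothesis rather than completeness and cocompleteness.
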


\begin{proof}
$\ck^\cc$ is complete and cocomplete (with limits and colimits calculated pointwise).
It is easy to see that $\varphi:P\to Q$ is a strong epimorphism in $\ck^\cc$ if and only if $\varphi_C:PC\to QC$ is a strong epimorphism on $\ck$ for each $C$ in $\cc$. Thus $\ck^\cc$ is weakly co-wellpowered. Let $\ca$ be a strong generator of $\ck$
consisting of nearly $\lambda$-presentable objects. Following \ref{blp1}, $E_\ca:\ck\to\Set^{\ca^{\op}}$ makes $\ck$ a reflective
subcategory of a locally $\lambda$-presentable category $\cl=\Set^{\ca^{\op}}$ with the conservative inclusion functor sending special 
$\lambda$-directed colimits to $\lambda$-directed colimits. Thus $\ck^\cc$ is a reflective subcategory of a locally $\lambda$-presentable category  $\cl^\cc$ with the conservative inclusion functor sending special $\lambda$-directed colimits to $\lambda$-directed colimits.
Following \ref{refl}, $\ck^\cc$ is nearly locally $\lambda$-presentable.
\end{proof}

\begin{theo}\label{char}
Let $\ck$ be a cocomplete category with regular factorizations in which coproduct injections are monomorphisms. Then $\ck$ is nearly
locally $\lambda$-presentable if and only if it is a full reflective subcategory of a presheaf category such that the inclusion preserves
special $\lambda$-directed colimits.
\end{theo}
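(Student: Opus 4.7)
The approach is to build a small dense subcategory of nearly $\lambda$-presentable objects in the forward direction, and to invoke Proposition~\ref{refl} in the backward direction.

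\textbf{Forward direction.} Let $\ca$ be the given strong generator of nearly $\lambda$-presentable objects, and form $\cd := \overline{\ca}$, the closure of $\ca$ under $\lambda$-small colimits and strong quotients, as in the proof of Corollary~\ref{regular}. Weak co-wellpoweredness of $\ck$ makes $\cd$ small; Remark~\ref{wgen}(1) and~(2), the latter using that coproduct injections are monomorphisms, ensures $\cd$ still consists of nearly $\lambda$-presentable objects. The proof of Corollary~\ref{regular} (which relies on the regular-factorization hypothesis via Proposition~\ref{exact}) shows that $\cd$ is in fact \emph{dense} in $\ck$. Hence the restricted-hom functor $E_\cd\colon\ck\to\Set^{\cd^{\op}}$, $E_\cd K = \ck(-,K)|_\cd$, is fully faithful, and cocompleteness of $\ck$ together with smallness of $\cd$ equips $E_\cd$ with a left adjoint (the left Kan extension of $\cd\hookrightarrow\ck$ along the Yoneda embedding). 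Thus $\ck$ is a full reflective subcategory of $\Set^{\cd^{\op}}$; preservation of special $\lambda$-directed colimits is then automatic, since each representable $\ck(D,-)$ with $D\in\cd$ preserves them by Remark~\ref{bpres1}(1) and colimits in the presheaf category are computed pointwise.

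\textbf{Backward direction.} Let $G\colon\ck\to\Set^{\cc^{\op}}$ be a fully faithful right adjoint, with left adjoint $F$, that preserves special $\lambda$-directed colimits. Then $\ck$ is complete (inherited from $\Set^{\cc^{\op}}$ through the reflective inclusion) and cocomplete (given). For weak co-wellpoweredness I would exploit the hypothesized regular factorizations of $\ck$: a strong epimorphism $q\colon K\to Q$ in $\ck$ factors as $q = m\circ e$ with $e$ a regular epimorphism and $m$ a monomorphism in $\ck$, and the strong-epi/mono lifting forces $m$ to be an isomorphism, so strong epimorphisms in $\ck$ coincide with regular ones. Regular epimorphisms out of $K$ are classified by their kernel pairs, which are subobjects of $K\times K$; since $G$ preserves limits and is fully faithful, these subobjects inject into subobjects of $GK\times GK$ in the well-powered $\Set^{\cc^{\op}}$, giving a set. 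Finally $G$ is conservative (being fully faithful) and sends special $\lambda$-directed colimits to $\lambda$-directed colimits, so Proposition~\ref{refl} applies and $\ck$ is nearly locally $\lambda$-presentable.

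\textbf{Main obstacle.} The essential nontrivial input is the density of $\overline{\ca}$ in the forward direction; this is where the regular-factorization hypothesis and Proposition~\ref{exact} (monomorphisms stable under $\lambda$-directed colimits) combine to show that the canonical diagram of subobjects of $K$ with domains in $\overline{\ca}$ has $K$ as its colimit. The ingredients of the backward direction are more routine once one recognises that Proposition~\ref{refl} is directly applicable, with the weak co-wellpoweredness of $\ck$ being the only item requiring verification.
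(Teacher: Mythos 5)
Your proposal is correct and follows essentially the same route as the paper: the forward direction uses the density of the closure $\overline{\ca}$ under $\lambda$-small colimits and strong quotients (Corollary~\ref{regular} together with Remark~\ref{wgen}) to embed $\ck$ reflectively into $\Set^{\overline{\ca}^{\op}}$, and the backward direction reduces to Proposition~\ref{refl}. The only difference is that you spell out the weak co-wellpoweredness step (regular quotients classified by kernel pairs, well-poweredness inherited through the limit-preserving fully faithful inclusion), which the paper asserts without detail; this is a correct and welcome elaboration rather than a deviation.
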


\begin{proof}
Since $\ck$ has regular factorizations, regular and strong epimorphisms coincide (see \cite{AR} 0.5). Let $\ck$ be a full reflective subcategory of a presheaf category such that the inclusion preserves special $\lambda$-directed colimits. Then $\ck$ is complete
and every object has only a set of regular quotients. Following \ref{refl}, $\ck$ is nearly locally $\lambda$-presentable. 

Let $\ck$ be nearly locally $\lambda$-presentable and $\ca$ be a strong generator consisting of nearly $\lambda$-presentable objects.
Let $\cb$ be the closure of $\ca$ under $\lambda$-small colimits and strong quotients. Following \ref{regular}, $\cb$ is dense in $\ck$,
and, following \ref{wgen}, every object of $\cb$ is nearly $\lambda$-presentable. Thus the functor $E_{\cb}:\ck\to \Set^{\cb^{\op}}$
preserves special $\lambda$-directed colimits and makes $\ck$ a full reflective subcategory of the presheaf category (see \cite{AR} 1.27).
\end{proof}

\begin{propo}\label{up1}
Let $\lambda_1\leq\lambda_2$ be regular cardinals. Then any nearly locally $\lambda_1$-presentable category is nearly locally 
$\lambda_2$-presentable.
\end{propo}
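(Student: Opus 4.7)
The plan is to reduce this to Lemma \ref{up} by inspecting the three clauses in Definition \ref{blp}. Suppose $\ck$ is nearly locally $\lambda_1$-presentable. Then $\ck$ is cocomplete and weakly co-wellpowered, and carries a strong generator $\ca$ consisting of nearly $\lambda_1$-presentable objects. The first two properties do not involve $\lambda$ at all, so they transfer to $\lambda_2$ at no cost; the entire content of the proposition therefore lies in upgrading the generator~$\ca$.

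Next I would invoke Lemma \ref{up}: since $\lambda_1 \leq \lambda_2$, each object of $\ca$, being nearly $\lambda_1$-presentable, is automatically nearly $\lambda_2$-presentable. Hence the very same set $\ca$ serves as a strong generator of $\ck$ whose objects are nearly $\lambda_2$-presentable, and Definition \ref{blp} is satisfied with $\lambda_2$ in place of $\lambda_1$.

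There is essentially no obstacle to overcome here; the statement is a routine monotonicity consequence of Lemma \ref{up}, and no additional construction or set-theoretic hypothesis is required. The only point worth a moment's thought is to confirm that the notion of strong generator itself does not depend on~$\lambda$, so the same $\ca$ witnesses the property at both cardinals.
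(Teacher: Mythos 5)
Your proof is correct and is exactly the argument the paper intends: its proof of Proposition~\ref{up1} consists of the single line ``Follows from~\ref{up}'', which is precisely your reduction of the generator clause to Lemma~\ref{up} while noting that cocompleteness and weak co-wellpoweredness are independent of~$\lambda$.
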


\begin{proof}
 Follows from~\ref{up}.
\end{proof}

\begin{lemma}\label{mono}
Every nearly locally $\lambda$-presentable category has monomorphisms stable under special $\lambda$-directed colimits.
\end{lemma}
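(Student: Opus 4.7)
The plan is to reduce the monomorphism condition on a morphism out of $\coprod_{i\in I}K_i$ to testing with maps from objects of the strong generator, and then use near $\lambda$-presentability to push the test down to a small subcoproduct where the hypothesis directly applies.

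Fix a special $\lambda$-directed colimit, write $u_J:\coprod_{j\in J}K_j\to\coprod_{i\in I}K_i$ for the subcoproduct injection attached to $J\subseteq I$ with $|J|<\lambda$, and let $f:\coprod_{i\in I}K_i\to K$ be a morphism such that $fu_J$ is a monomorphism for every such $J$. Let $\ca$ be the strong generator of $\ck$ consisting of nearly $\lambda$-presentable objects. Since $E_\ca$ is faithful, to prove that $f$ is a monomorphism it suffices to show that for any $A\in\ca$ and any two morphisms $g_1,g_2:A\to\coprod_{i\in I}K_i$ with $fg_1=fg_2$, one has $g_1=g_2$. This is the first step.

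For the second step, I would invoke near $\lambda$-presentability of $A$ to factor each $g_k$ through a subcoproduct: there are subsets $J_k\subseteq I$ with $|J_k|<\lambda$ and morphisms $h_k:A\to\coprod_{j\in J_k}K_j$ satisfying $g_k=u_{J_k}h_k$. Setting $J_0=J_1\cup J_2$, we still have $|J_0|<\lambda$ by regularity of $\lambda$, and composing each $h_k$ with the evident inclusion produces $h_k':A\to\coprod_{j\in J_0}K_j$ with $g_k=u_{J_0}h_k'$. Then
$$
(fu_{J_0})h_1'=fg_1=fg_2=(fu_{J_0})h_2',
$$
and by hypothesis $fu_{J_0}$ is a monomorphism, so $h_1'=h_2'$ and therefore $g_1=g_2$.

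The only place where one might worry is that Definition \ref{bpres} only guarantees an \emph{essentially} unique factorization through a subcoproduct when coproduct injections need not be monomorphisms; however, this does not obstruct the argument since we only use the existence of the factorizations $h_k$, not their uniqueness. The main conceptual point—and really the only non-trivial step—is recognizing that the "generator" half of "strong generator" is exactly what lets us reduce to test objects $A\in\ca$, after which near $\lambda$-presentability and the hypothesis combine immediately.
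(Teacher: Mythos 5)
Your proof is correct and takes essentially the same route as the paper's: reduce to testing with pairs of morphisms out of nearly $\lambda$-presentable objects via the generator, factor both through a common small subcoproduct, and apply the hypothesis that the restriction of $f$ to that subcoproduct is a monomorphism. The paper's version is merely terser, leaving implicit the passage to the union $J_1\cup J_2$ and the observation that only existence (not uniqueness) of the factorizations is needed.
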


\begin{proof}
Let $f:\coprod_IK_i\to K$ be a morphism whose compositions
$f_J:\coprod_JK_j\to K$ with the subcoproduct injections
$\coprod_JK_j\to\coprod_IK_i$ are monomorphisms for all
$J\subseteq I$ of cardinality $<\lambda$.
It suffices to show that $u=v$ for any 
$u,v:A\to\coprod_IK_i$ such that $fu=fv$ and $A$ is nearly $\lambda$-presentable.
Since $u$ and $v$ factorize through a subcoproduct injection
$\coprod_JK_j\to\coprod_IK_i$, we have $fu=f_Ju'$ and $fv=f_Jv'$
for $u',v':A\to\coprod_JK_j$.
Since $f_J$ is a monomorphism, $u'=v'$ and thus $u=v$.
\end{proof}

\begin{theo}\label{dual1}
Let $\ck$ be a locally presentable category such that $\ck^{\op}$ is nearly locally presentable. Then $\ck$ is equivalent to a complete lattice.
\end{theo}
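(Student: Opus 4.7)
The proof should be a very short combination of two results already in hand, namely Lemma~\ref{mono} and Proposition~\ref{dual}. The plan is simply to transport the hypothesis on $\ck^{\op}$ through Lemma~\ref{mono} into the form required by Proposition~\ref{dual}, and then invoke the latter.

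In detail, first I would assume that $\ck^{\op}$ is nearly locally $\lambda$-presentable for some fixed regular cardinal $\lambda$. By Lemma~\ref{mono} applied to $\ck^{\op}$, this category has monomorphisms stable under special $\lambda$-directed colimits, which is precisely the hypothesis on the opposite category appearing in Proposition~\ref{dual}. Since $\ck$ itself is locally presentable by assumption, Proposition~\ref{dual} applies verbatim and yields that $\ck$ is equivalent to a complete lattice.

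There is essentially no obstacle in this argument: no additional construction is needed beyond quoting the two earlier results with the correct choice of cardinal $\lambda$ (the one witnessing near local presentability of $\ck^{\op}$). The only point worth a line of comment is that although Proposition~\ref{dual} is stated for a specific~$\lambda$, the witness $\lambda$ provided by near local presentability of $\ck^{\op}$ is exactly the same cardinal one feeds into Proposition~\ref{dual}, so no cardinal-juggling is required. Thus the proof reduces to two sentences, noting the application of Lemma~\ref{mono} to $\ck^{\op}$ and then of Proposition~\ref{dual} to $\ck$.
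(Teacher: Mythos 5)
Your proposal is correct and matches the paper's proof exactly: the paper also deduces the theorem by applying Lemma~\ref{mono} to $\ck^{\op}$ and then invoking Proposition~\ref{dual}. Nothing further is needed.
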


\begin{proof}
It follows from \ref{mono} and \ref{dual}.
\end{proof}



\end{document}